\newtheorem{prop}{Proposition}[section]
\newtheorem{remark}{Remark}[section]
\begin{document}

\title{Comparing Different Models for Investigating Cascading Failures in Power Systems}

\author{
\IEEEauthorblockN{Chao Zhai, Hehong Zhang, Gaoxi Xiao}
\IEEEauthorblockA{Institute of Catastrophe Risk Management\\
School of Electrical and Electronic Engineering \\
Nanyang Technological University \\
50 Nanyang Avenue, Singapore 639798.\\
Future Resilient Systems, Singapore-ETH Centre\\
1 Create Way, CREATE Tower, Singapore 138602.\\
Email: EGXXiao@ntu.edu.sg}
\and
\IEEEauthorblockN{Tso-Chien~Pan}
\IEEEauthorblockA{Institute of Catastrophe Risk Management\\
School of Civil and Environmental Engineering \\
Nanyang Technological University \\
50 Nanyang Avenue, Singapore 639798.\\
Future Resilient Systems, Singapore-ETH Centre\\
1 Create Way, CREATE Tower, Singapore 138602.}
}

\maketitle

\begin{abstract}
This paper centers on the comparison of three different models that describe cascading failures of power systems. Specifically, these models are different in characterizing the physical properties of power networks and computing the branch power flow. Optimal control approach is applied on these models to identify the critical disturbances that result in the worst-case cascading failures of power networks. Then we compare these models by analyzing the critical disturbances and cascading processes. Significantly, comparison results on IEEE 9 bus system demonstrate that physical and electrical properties of power networks play a crucial role in the evolution of cascading failures, and it is necessary to take into account these properties appropriately while applying the model in the analysis of cascading blackout.
\end{abstract}

\begin{IEEEkeywords}
Cascading failure; power networks; optimal control; complex networks; model comparison.
\end{IEEEkeywords}

\IEEEpeerreviewmaketitle

\section{Introduction}
Cascading blackout of power networks usually affects large areas of numerous people and results in huge economic losses. For instance, the blackout in United States and Canada in August 2003 was caused by the operation of protective relay to sever the overloading branches and the inadequate situational awareness of operators \cite{empg04}, affecting an area of 50 million people causing a loss of more than 4 billion U.S. dollars. In November 2006, a few European countries experienced a severe blackout, which was triggered by the tripping of several high-voltage lines in Northern Germany and resulted in the outage of power supply for 15 million residents \cite{utce07}. Thus, it is of great significance to come up with an appropriate model in order to identify the critical initial disturbances, which may help to eliminate the cascading blackout of power systems in advance.

Prior to making the model, it is necessary to figure out the cause and process of cascading blackout in power systems. Normally, a cascading blackout is initiated by one contingency event of component outage and subsequent operator errors, which brings about a sequence of component outages due to the branch overloading \cite{hine16,dob07,vai12}. During the cascading process, each cascading step is considered as a topological change of power networks. To the best of our knowledge, there have been mainly three types of models characterizing cascading failures of power systems \cite{hine16}. To be specific, the first type of models only describes the topological properties of power networks and neglects the underlying laws of physics and the principles of electrotechnics \cite{yuy16,hine10}, while the second one takes into account the quasi-steady-state of power systems and computes the power flow by solving the direct current (DC) or the alternate current (AC) power flow equations \cite{sol14,cz17,hhz17}. In addition, the last one aims to investigate the emergence of cascading failures via dynamical modeling of power system components \cite{cate84}. Statistically, the topology based complex network models behave like the DC power flow model under intentional attacks \cite{ou14}. Nevertheless, \cite{pag13} surveys the approaches of complex networks analysis in power grids and points out the necessity to incorporate the physical and electrical properties. This work attempts to compare different models and determine the scope of their applications.

In the past decades, coordination and control of multi-agent systems has attracted great interests of researchers in various fields \cite{cz12,htz11,zhai16}. Multi-agent system approaches are applied to power system control and protection since a bus in power grids can be regarded as a smart agent able to communicate and interact with its neighbors. \cite{bab16} proposes an adaptive multi-agent system algorithm to prevent cascading failures. In this paper, we treat each bus as an agent that is able to transmit, receive and consume power in power grids, and optimal control theory is employed to identify the critical disturbances that give rise to worst-case cascading failures of power networks. The proposed models are compared by analyzing the critical disturbances and cascading processes.

The remainder of this paper is organized as follows. Section \ref{sec:prob} presents the three different models and the optimal control formulation of identifying disturbances. Section \ref{sec:the} provides theoretical results for the optimal control problem, followed by numerical simulations and comparisons in Section \ref{sec:sim}. Finally, we conclude the paper and discuss future work in Section \ref{sec:con}.

\section{Problem Statement}\label{sec:prob}

\begin{figure*}
\scalebox{0.06}[0.06]{\includegraphics{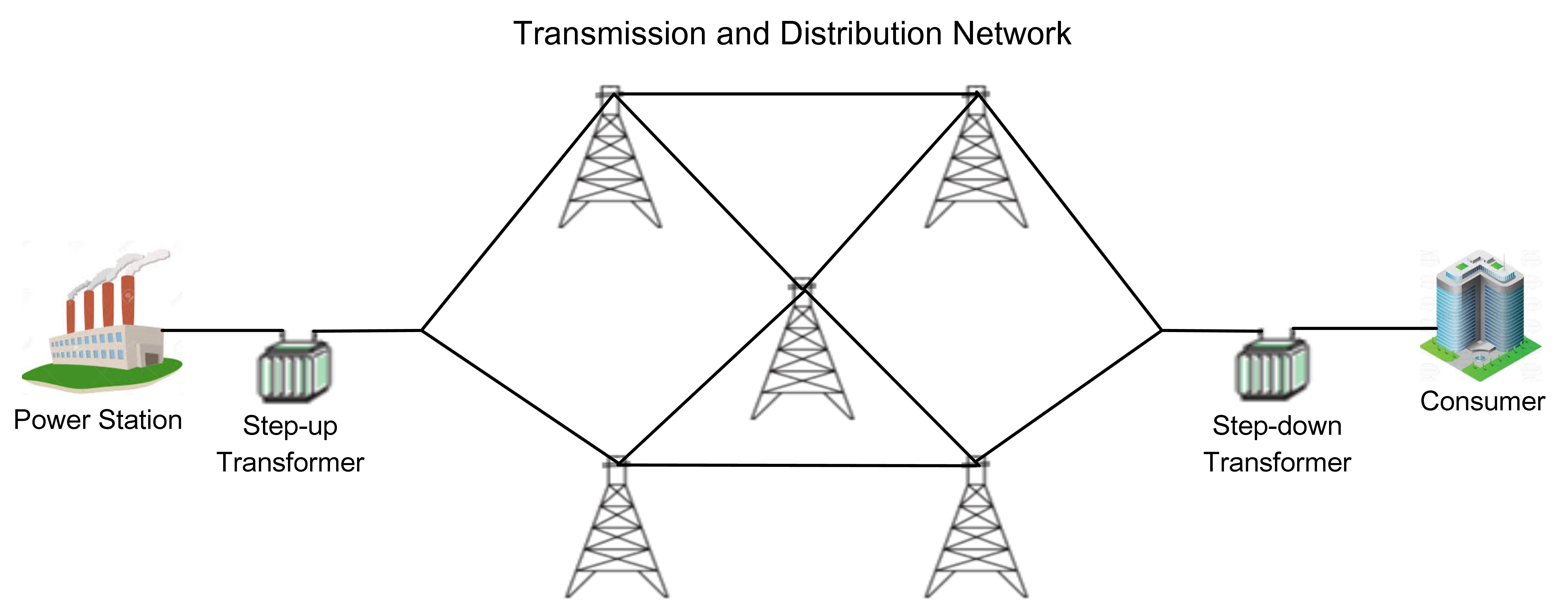}}\centering
\caption{\label{pnet} Schematic diagram on the components of power systems.}
\end{figure*}

An electrical power system is normally composed of power stations that generate electric power, transformers which raise or lower the voltages, power transmission and distribution networks and consumers to use electric power (see Fig.~\ref{pnet}). This paper focuses on the comparison of network models characterizing the dynamical evolution of the transmission and distribution networks in power systems. Fig.~\ref{flow} presents the cascading process of power systems in network models. Specifically, the disruptive disturbances change the branch admittance or the connection level, which leads to the overloading of some branches. Then the overloaded branches are removed from the network to describe the operation of circuit breakers in power grids, which reconfigures the network topology. As a result, the change of network topology leads to the imbalance of power flow once again. In short, it takes turns to compute the power flow and update the network topology, which describes the cascading process of power systems in practice. The above process terminates once the power flow is balanced without further evolution of network topology.

Our goal is to compare three different models by identifying disruptive disturbances on vulnerable branches that result in worst-case cascading failures and exploring the effects of physical properties of power systems on the evolution of cascading blackouts.

Actually, the three models share the same mechanism of severing the overloading branches. To facilitate the theoretical analysis, we design the following function to describe the state shift of overloading branches
\begin{equation}\label{step}
g(P_{ij},c_{ij})=\left\{
                \begin{array}{ll}
                  0, & \hbox{$|P_{ij}|\geq \sqrt{c_{ij}^2+\frac{\pi}{2\sigma}}$;} \\
                  1, & \hbox{$|P_{ij}|\leq\sqrt{c_{ij}^2-\frac{\pi}{2\sigma}}$;} \\
                  \frac{1-\sin\sigma (P_{ij}^2-c_{ij}^2)}{2}, & \hbox{otherwise.}
                \end{array}
              \right.
\end{equation}
where $c_{ij}$ denotes the power threshold of transmission line connecting Bus $i$ to Bus $j$, and $P_{ij}$ refers to the transmission power on this branch. It is worth pointing out that the function $g(P_{ij},c_{ij})$ approximates to step function as the tunable parameter $\sigma$ gets close to the positive infinity.

Essentially, the three models are different in the computation of power flow. Next, we present their mathematical expressions as follows.

\subsection{Complex Network Model (CNM)}
Without the consideration of physical properties, the complex networks model of power systems is given by
\begin{equation}\label{complex}
P=A^Tdiag(S^k)P^k_e
\end{equation}
where $P\in R^{n_b}$ denotes the vector of injected power on a total of $n_b$ buses and $A$ refers to the branch-bus incidence matrix \cite{stag68}. $P^k_e=(P^k_{ij})\in R^n$ is the vector of transmission power on a total of $n$ branches at the $k$-th cascading step. $S^k$ is the state vector on the connection level of branches. The operation $diag(x)$ obtains a square diagonal matrix with the elements of vector $x$ on the main diagonal. Since the matrix $A^T$ could be nonsingular or a non-square matrix, a least square solution to (\ref{complex}) is given as
\begin{equation}\label{comp_sol}
P^k_e=\left[A^Tdiag(S^k)\right]^{+}P
\end{equation}
where $[A^Tdiag(S^k)]^{+}$ stands for the Moore-Penrose pseudoinverse of $A^Tdiag(S^k)$ \cite{mp95}.

\subsection{DC-Based Network Model (DCM)}
For high-voltage transmission networks, the DC power flow equation is employed to compute the power flow \cite{stot09}.
\begin{equation}\label{dc_power}
P=A^Tdiag(Y_p^k)A\theta^k, \quad k=0,1,2,...,m-1
\end{equation}
where $Y_p^k$ denotes the vector of branch admittance at the $k$-th cascading step, and $\theta^k$ represents the vector of voltage angle on each bus. Moreover, the solution to (\ref{dc_power}) is expressed as
$$
\theta^k=(A^Tdiag(Y_p^k)A)^{-1^*}P
$$
with the operator $-1^*$ being defined as a type of pseudoinverse in \cite{cz17}. According to Lemma 3.2 in \cite{cz17}, the vector of transmission power on $n$ branches at the $k$-th cascading step is given by
\begin{equation}\label{pijk}
P^k_{e}=diag(Y^k_p)A(A^T diag(Y^k_p)A)^{-1^*}P
\end{equation}
where $P^k_e=(P^k_{ij})\in R^n$ with $i,j\in I_{n_b}=\{1,2,...,n_b\}$.

\subsection{AC-Based Network Model (ACM)}
The AC power flow equation allows for both active power and reactive power in power networks, which provides the most accurate description of practical power networks despite high computation cost. For Bus $i\in I_{n_b}$, the AC power flow equation is expressed as
\begin{equation}\label{acpower}
\begin{split}
P_i&=\sum_{j=1}^{n_b}|V^k_i|\cdot|V^k_j|(G^k_{ij}cos\theta^k_{ij}+B^k_{ij}sin\theta^k_{ij})\\
Q_i&=\sum_{j=1}^{n_b}|V^k_i|\cdot|V^k_j|(G^k_{ij}sin\theta^k_{ij}-B^k_{ij}cos\theta^k_{ij})\\
\end{split}
\end{equation}
where $P_{i}$ and $Q_i$ are the net active power and reactive power injected at Bus $i$, respectively. Likewise, $|V^k_i|$ and $|V^k_j|$ denote the voltage magnitude on Bus $i$ and Bus $j$ at the $k$-th cascading step, respectively. $G^k_{ij}$ and $B^k_{ij}$ are the real part and the imaginary part of the element in the bus admittance matrix $A^Tdiag(Y^k_p)A$ corresponding to the $i$-th row and $j$-th column, respectively. $\theta^k _{ij}=\theta^k _{i}-\theta^k _{j}$ is the difference in voltage angle between Bus $i$ and Bus $j$ \cite{gra94}. Unfortunately, there are no analytical solutions to equation (\ref{acpower}). Thus we can only estimate its solution via numerical methods.
\begin{remark}
The number of unknowns in the AC power flow equation depends on the total number of buses in power networks and the distribution of load buses and generator buses. For load buses, the injected active power and reactive power are known while the injected active power and the voltage magnitude are available for generator buses.
\end{remark}

\begin{figure}[t!]
\scalebox{0.06}[0.06]{\includegraphics{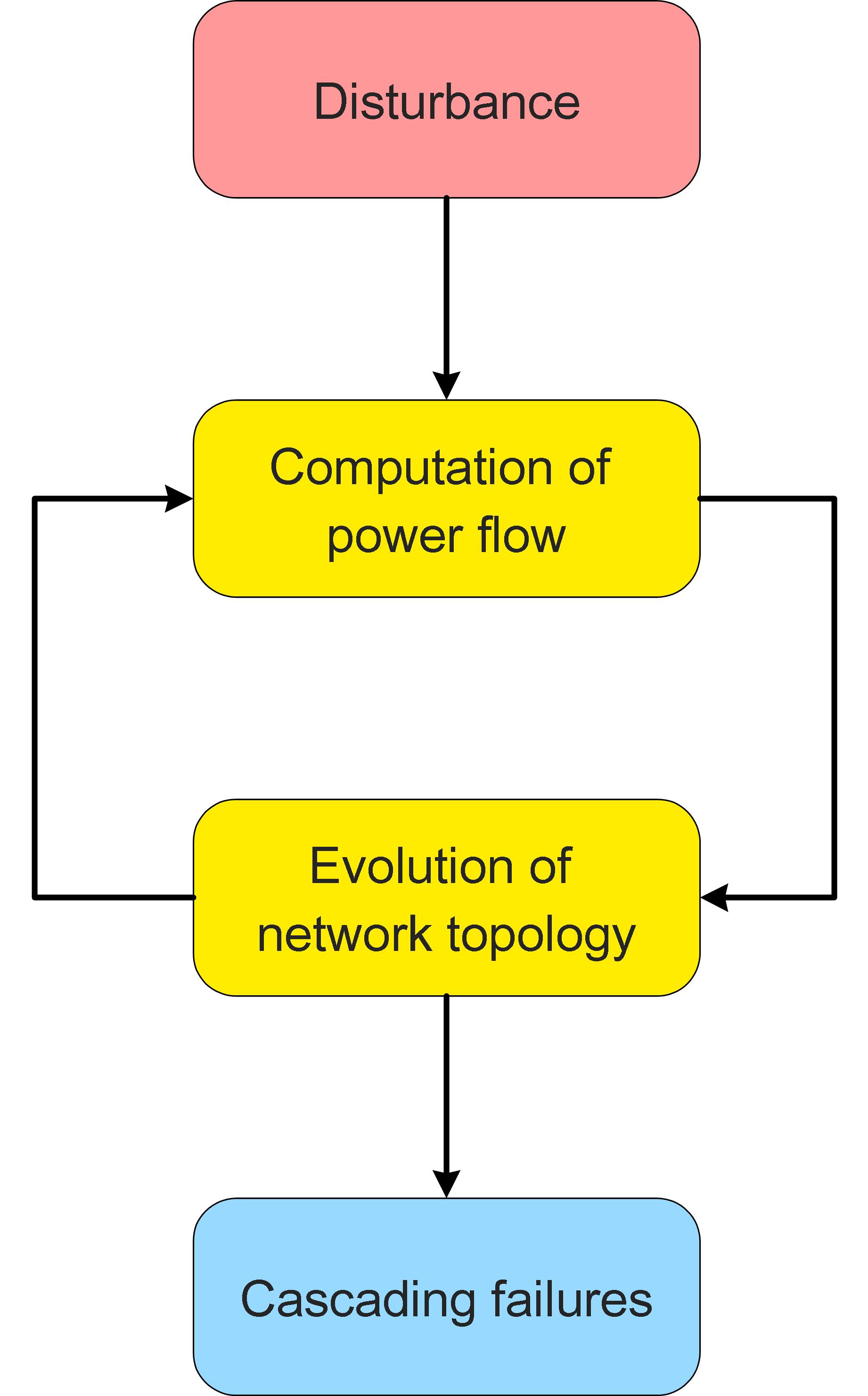}}\centering
\caption{\label{flow} Cascading process of power systems in three models.}
\end{figure}

\subsection{Optimization Formulation}
Then we present the state equation of transmission lines as follows
\begin{equation}\label{cas_model}
X^{k+1}=\mathcal{G}(P_e^k)X^k+U^k, , \quad k=0,1,...,m-1
\end{equation}
where $U^k$ represents the control input or disturbance on the given branches, and $X^k$ denotes the state vector of branches at the $k$-th cascading step. Specifically, $X^k$ describes the branch state of connection in the CNM while it characterizes the branch admittance in the DCM  and ACM. In short, $X^k$ can be expressed as
$$
X^k=\left\{
  \begin{array}{ll}
    S^k, & \hbox{CNM;} \\
    Y^k_p, & \hbox{DCM or ACM.}
  \end{array}
\right.
$$
In particular, the state matrix of transmission lines  $\mathcal{G}(P_{e}^{k})$ at the $k$-th cascading step is given by
$$
\mathcal{G}(P_e^{k})=diag\left(
                          \begin{array}{c}
                            g(P_{i_1j_1}^{k},c_{i_1j_1}) \\
                            g(P_{i_2j_2}^{k},c_{i_2j_2}) \\
                            . \\
                            g(P_{i_nj_n}^{k},c_{i_nj_n}) \\
                          \end{array}
                        \right)\in R^{n \times n}
$$

The identification of initial disturbances that cause the worst cascading failures of power systems is formulated as the following optimal control problem.
\begin{equation}\label{cost}
\min_{U^k}J(X^m,U^k)
\end{equation}
with the cost function
\begin{equation}\label{cost_fun}
J(X^m,U^k)=\|X^m\|^2+\epsilon\sum_{k=0}^{m-1}\frac{\|U^k\|^2}{\max\{0,1-k\}}
\end{equation}
where $\epsilon$ is a positive weight, and $\|\cdot\|$ represents the 2-norm. $m$ is the total number of cascading steps. The proposed cost function includes two terms. Specifically, the first term $\|X^m\|^2$ is differentiable with respect to $X^m$, and it quantifies the connectivity of power networks at the final cascading step, and the second term characterizes the control energy or disturbance strength at the initial step. In addition, the parameter $\epsilon$ is set small enough so that the first term dominates in the cost function. The objective is to minimize $\|X^m\|^2$ by adding the appropriate control input or initial disturbance $U^0$ on the selected branches of power networks.

\section{Theoretical Analysis}\label{sec:the}
In this section, we present theoretical results on optimal control problem (\ref{cost}). For the discrete time nonlinear system, optimal control theory provides the necessary conditions of minimizing the cost function.
\begin{prop}\label{disopt}
For the discrete time optimal control problem
$$
\min_{U^k}J(X^k,U^k)
$$
with the state equation
$$
X^{k+1}=F(X^k,U^k), \quad k=0,1,...,m-1
$$
and the cost function
$$
J(X^k,U^k)=\Phi(X^m)+\sum_{k=0}^{m-1}L(X^k,U^k),
$$
the necessary conditions for the optimal control input $U^{k*}$ are given as follows
\begin{enumerate}
  \item $X^{k+1}=F(X^k,U^k)$
  \item $\lambda_k=(\frac{\partial F}{\partial X^k})^T\lambda_{k+1}+\frac{\partial L}{\partial X^k}$
  \item $(\frac{\partial F}{\partial U^k})^T\lambda_{k+1}+\frac{\partial L}{\partial U^k}=0$
  \item $\lambda_m=\frac{\partial \Phi}{\partial X^m}$
\end{enumerate}
where $\lambda_k$ denotes the costate variable.
\end{prop}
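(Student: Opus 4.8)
The plan is to use the Lagrange multiplier (adjoint) method, treating the discrete dynamics as equality constraints and the costate variables $\lambda_{k+1}$ as the associated multipliers. First I would adjoin the state equation to the cost to form the augmented objective
$$
\bar{J}=\Phi(X^m)+\sum_{k=0}^{m-1}\left[L(X^k,U^k)+\lambda_{k+1}^T\left(F(X^k,U^k)-X^{k+1}\right)\right].
$$
Since each constraint $X^{k+1}-F(X^k,U^k)=0$ holds along any admissible trajectory, $\bar{J}$ and $J$ agree on the feasible set, so a minimizer of $J$ must be a stationary point of $\bar{J}$ with the multipliers left free. Condition 1 is then simply the statement that $\partial\bar{J}/\partial\lambda_{k+1}=0$, which returns the original state equation.

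The key manipulation is a discrete summation-by-parts on the coupling term. I would re-index the sum $\sum_{k=0}^{m-1}\lambda_{k+1}^T X^{k+1}=\sum_{j=1}^{m}\lambda_j^T X^j$, so that the multiplier paired with the perturbation $\delta X^k$ becomes $\lambda_k$ rather than $\lambda_{k+1}$. Introducing the Hamiltonian $H^k=L(X^k,U^k)+\lambda_{k+1}^T F(X^k,U^k)$, the augmented cost reads $\bar{J}=\Phi(X^m)+\sum_{k=0}^{m-1}H^k-\sum_{j=1}^{m}\lambda_j^T X^j$.

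Next I would compute the first variation $\delta\bar{J}$ with respect to independent perturbations $\delta X^k$ and $\delta U^k$, applying the chain rule through the Jacobians $\partial F/\partial X^k$ and $\partial F/\partial U^k$. Collecting the coefficient of $\delta X^k$ for $1\le k\le m-1$ and equating it to zero yields $(\partial L/\partial X^k)^T+\lambda_{k+1}^T(\partial F/\partial X^k)-\lambda_k^T=0$, i.e. the costate recursion (condition 2); the coefficient of $\delta X^m$, which appears only in $\partial\Phi/\partial X^m$ and in the $j=m$ boundary term, gives $\lambda_m=\partial\Phi/\partial X^m$ (condition 4); and the coefficient of each $\delta U^k$ gives the stationarity relation $(\partial F/\partial U^k)^T\lambda_{k+1}+\partial L/\partial U^k=0$ (condition 3).

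The main obstacle is the bookkeeping at the two ends of the summation. The crucial observation is that the initial state $X^0$ is prescribed, so $\delta X^0=0$ and no stationarity condition is imposed at $k=0$, whereas $X^m$ is free, so its variation must be annihilated by the choice of $\lambda_m$ — which is precisely what produces the terminal condition. Getting the index alignment right in the summation-by-parts step, so that the $-\lambda_k^T$ term lands alongside $(\partial L/\partial X^k)^T$ and $\lambda_{k+1}^T(\partial F/\partial X^k)$, is the only delicate point; the remainder is the routine first-order stationarity argument for an unconstrained problem in the variables $(X^k,U^k,\lambda_k)$.
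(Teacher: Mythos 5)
Your proposal is correct, but it takes a different route from the paper in a trivial-sounding yet real sense: the paper does not actually prove this proposition at all. Its entire ``proof'' is a one-line remark that the statement is the time-invariant special case of the discrete-time optimal control result for time-varying nonlinear systems in the cited textbook of Lewis and Syrmos, with the derivation omitted. What you have written is, in effect, the derivation that the paper delegates to that reference: the standard adjoint/Lagrange-multiplier argument in which the dynamics are adjoined to the cost, the Hamiltonian $H^k = L(X^k,U^k)+\lambda_{k+1}^T F(X^k,U^k)$ is introduced, the coupling sum is re-indexed so that $\lambda_k$ pairs with $\delta X^k$, and the coefficients of $\delta X^k$, $\delta X^m$, and $\delta U^k$ yield the costate recursion, the terminal condition, and the stationarity condition respectively. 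Your bookkeeping at the endpoints ($\delta X^0=0$ because the initial state is prescribed, $X^m$ free so its coefficient forces $\lambda_m=\partial\Phi/\partial X^m$) is exactly the delicate point and you handle it correctly; note also that no constraint-qualification worry arises here, since the constraint Jacobian with respect to $(X^1,\dots,X^m)$ is block triangular with invertible diagonal blocks, so the multiplier rule applies without further hypotheses. What your approach buys is a self-contained verification of the necessary conditions; what the paper's approach buys is brevity, at the cost of asking the reader to trust (or look up) the textbook result.
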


\begin{proof}
It is a special case ($i.e.$, time invariant case) of the optimal control for the time-varying discrete time nonlinear system in \cite{fran95}. Hence the proof is omitted.
\end{proof}
By applying Proposition \ref{disopt} to the optimal control problem (\ref{cost}), we obtain the necessary conditions for identifying the initial disturbance of power systems with state equation (\ref{cas_model}).
\begin{prop}\label{sysAE}
The necessary condition for the optimal control problem (\ref{cost}) is given by solving the following system of algebraic equations.
\begin{equation}\label{con_sys}
X^{k+1}=\mathcal{G}(P^k_{e})X^k+U^k, \quad k=0,1,...,m-1
\end{equation}
and the control input $U^k$ is expressed as
$$
U^k=\left\{
      \begin{array}{ll}
        -\frac{1}{\epsilon}\prod_{s=0}^{m-2}\frac{\partial X^{m-s}}{\partial X^{m-s-1}}\cdot X^m, & \hbox{$k=0$;} \\
        \mathbf{0}_n, & \hbox{$k\geq1$.}
      \end{array}
    \right.
$$
where $\mathbf{0}_n=(0,0,...,0)^T\in R^n$
\end{prop}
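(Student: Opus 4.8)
The plan is to apply Proposition \ref{disopt} directly to problem (\ref{cost}), matching the abstract ingredients to the concrete ones: the terminal penalty is $\Phi(X^m)=\|X^m\|^2$, the running cost is $L(X^k,U^k)=\epsilon\|U^k\|^2/\max\{0,1-k\}$, and the dynamics are $F(X^k,U^k)=\mathcal{G}(P_e^k)X^k+U^k$. I would first record the elementary partial derivatives. Since $\Phi$ is a squared norm, $\partial\Phi/\partial X^m=2X^m$, so terminal condition~(4) of Proposition \ref{disopt} yields the final costate $\lambda_m=2X^m$. Because $L$ does not depend on $X^k$, we have $\partial L/\partial X^k=\mathbf{0}_n$; because the input enters the dynamics additively and $P_e^k$ is determined by $X^k$ rather than $U^k$, we have $\partial F/\partial U^k=I_n$; and finally $\partial L/\partial U^k=2\epsilon U^k/\max\{0,1-k\}$.

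Next I would exploit the stationarity condition~(3), which with these derivatives becomes $\lambda_{k+1}+2\epsilon U^k/\max\{0,1-k\}=\mathbf{0}_n$. The crucial observation is the role of the weight $\max\{0,1-k\}$: for every $k\geq 1$ this quantity is zero, so the running-cost term assigns an infinite penalty to any nonzero $U^k$, and optimality forces $U^k=\mathbf{0}_n$ for $k\geq 1$, which is exactly the second branch of the claimed formula. For $k=0$ the weight equals $1$ and the stationarity condition reduces to $\lambda_1+2\epsilon U^0=\mathbf{0}_n$, giving $U^0=-\lambda_1/(2\epsilon)$.

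It then remains to express $\lambda_1$ in terms of $X^m$. I would iterate the costate recursion~(2), which---because $\partial L/\partial X^k=\mathbf{0}_n$---collapses to $\lambda_k=(\partial X^{k+1}/\partial X^k)^T\lambda_{k+1}$, where $\partial X^{k+1}/\partial X^k=\partial F/\partial X^k$ is the one-step state Jacobian. Substituting $\lambda_m=2X^m$ and telescoping from $k=m-1$ down to $k=1$ gives $\lambda_1=2\big[\prod_{s=0}^{m-2}\partial X^{m-s}/\partial X^{m-s-1}\big]X^m$ (up to the transpose bookkeeping produced by repeated use of $(AB)^T=B^TA^T$). Inserting this into $U^0=-\lambda_1/(2\epsilon)$ cancels the factor of $2$ against the $1/2$ and produces the stated expression for the initial disturbance.

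The main obstacle is the degenerate weight $\max\{0,1-k\}$: the running cost is literally a division by zero for $k\geq 1$, so one must justify the passage from \emph{infinite penalty} to the hard constraint $U^k=\mathbf{0}_n$ rather than apply condition~(3) mechanically as an algebraic identity. A secondary bookkeeping point concerns the ordering and transposition of the Jacobian factors $\partial X^{k+1}/\partial X^k$; since each such factor equals $\mathcal{G}(P_e^k)+(\partial\mathcal{G}(P_e^k)/\partial X^k)X^k$ by the product rule, it is in general not symmetric, so some care is needed to reconcile the telescoped product of transposes with the product ordering displayed in the statement.
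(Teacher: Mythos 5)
Your proposal is correct and follows essentially the same route as the paper's proof: apply Proposition \ref{disopt} with $\Phi(X^m)=\|X^m\|^2$ and $L=\epsilon\|U^k\|^2/\max\{0,1-k\}$, solve the stationarity condition to obtain $U^k=-\lambda_{k+1}\max\{0,1-k\}/(2\epsilon)$, telescope the costate recursion $\lambda_k=\left(\partial X^{k+1}/\partial X^k\right)^T\lambda_{k+1}$ from $\lambda_m=2X^m$, and combine. If anything, you are more careful than the paper on the two delicate points you flag---the paper formally solves the degenerate-weight equation (so the factor $\max\{0,1-k\}$ mechanically zeroes out $U^k$ for $k\geq 1$) rather than arguing via infinite penalty, and it silently drops the transposes when writing the telescoped Jacobian product---so your treatment is a refinement of, not a departure from, the published argument.
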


\begin{proof}
According to Proposition \ref{disopt}, the necessary conditions for the optimal control problem (\ref{cost}) can be determined as
\begin{equation}\label{cond_st}
X^{k+1}=\mathcal{G}(P_{e}^k)\cdot X^{k}+U^k
\end{equation}

\begin{equation}\label{cond_u}
\left(\frac{\partial X^{k+1}}{\partial U^k}\right)^T\lambda_{k+1}+\frac{\epsilon }{\max\{0,1-k\}}\cdot\frac{\partial\|U^k\|^2}{\partial U^k}=0
\end{equation}

\begin{equation}\label{cond_y}
\lambda_k=\left(\frac{\partial X^{k+1}}{\partial X^k}\right)^T\lambda_{k+1}+\frac{\epsilon }{\max\{0,1-k\}}\cdot\frac{\partial\|U^k\|^2}{\partial X^k}
\end{equation}

\begin{equation}\label{cond_final}
\lambda_m=\frac{\partial \|X^m\|^2}{\partial X^m}
\end{equation}
Thus, solving (\ref{cond_u}) leads to
\begin{equation}\label{cond_uk}
U^k=-\frac{\lambda_{k+1}}{2\epsilon}\max\{0,1-k\}
\end{equation}
and simplifying (\ref{cond_y}) yields
\begin{equation}\label{cond_lamb}
\lambda_k=\left(\frac{\partial X^{k+1}}{\partial X^k}\right)^T\lambda_{k+1}
\end{equation}
with the final condition $\lambda_m=2X^m$ being derived from (\ref{cond_final}).
Then we obtain
\begin{equation}\label{cond_lambd}
\lambda_{k+1}=2\prod_{s=0}^{m-k-2}\frac{\partial X^{m-s}}{\partial X^{m-s-1}}\cdot X^m.
\end{equation}
Combining (\ref{cond_uk}) and (\ref{cond_lambd}), we obtain
\begin{equation}\label{cond_ukh}
U^k=-\frac{\max\{0,1-k\}}{\epsilon}\prod_{s=0}^{m-k-2}\frac{\partial X^{m-s}}{\partial X^{m-s-1}}\cdot X^m
\end{equation}
which is equivalent to
$$
U^k=\left\{
      \begin{array}{ll}
        -\frac{1}{\epsilon}\prod_{s=0}^{m-2}\frac{\partial X^{m-s}}{\partial X^{m-s-1}}\cdot X^m, & \hbox{$k=0$;} \\
        \mathbf{0}_n, & \hbox{$k\geq1$.}
      \end{array}
    \right.
$$
Substituting (\ref{cond_ukh}) into (\ref{cond_st}) yields
\begin{equation}
\begin{split}
X^{k+1}=\mathcal{G}(P^k_{e})X^k&-\frac{\max\{0,1-k\}}{\epsilon}\prod_{s=0}^{m-k-2}\frac{\partial X^{m-s}}{\partial X^{m-s-1}}\cdot X^m \\
&~k=0,1,...,m-1
\end{split}
\end{equation}
which is the integrated mathematical expression of necessary conditions (\ref{cond_st}), (\ref{cond_u}), (\ref{cond_y}) and (\ref{cond_final}) for the optimal control problem (\ref{cost}). The proof is thus completed.
\end{proof}

\begin{figure}
\scalebox{0.65}[0.65]{\includegraphics{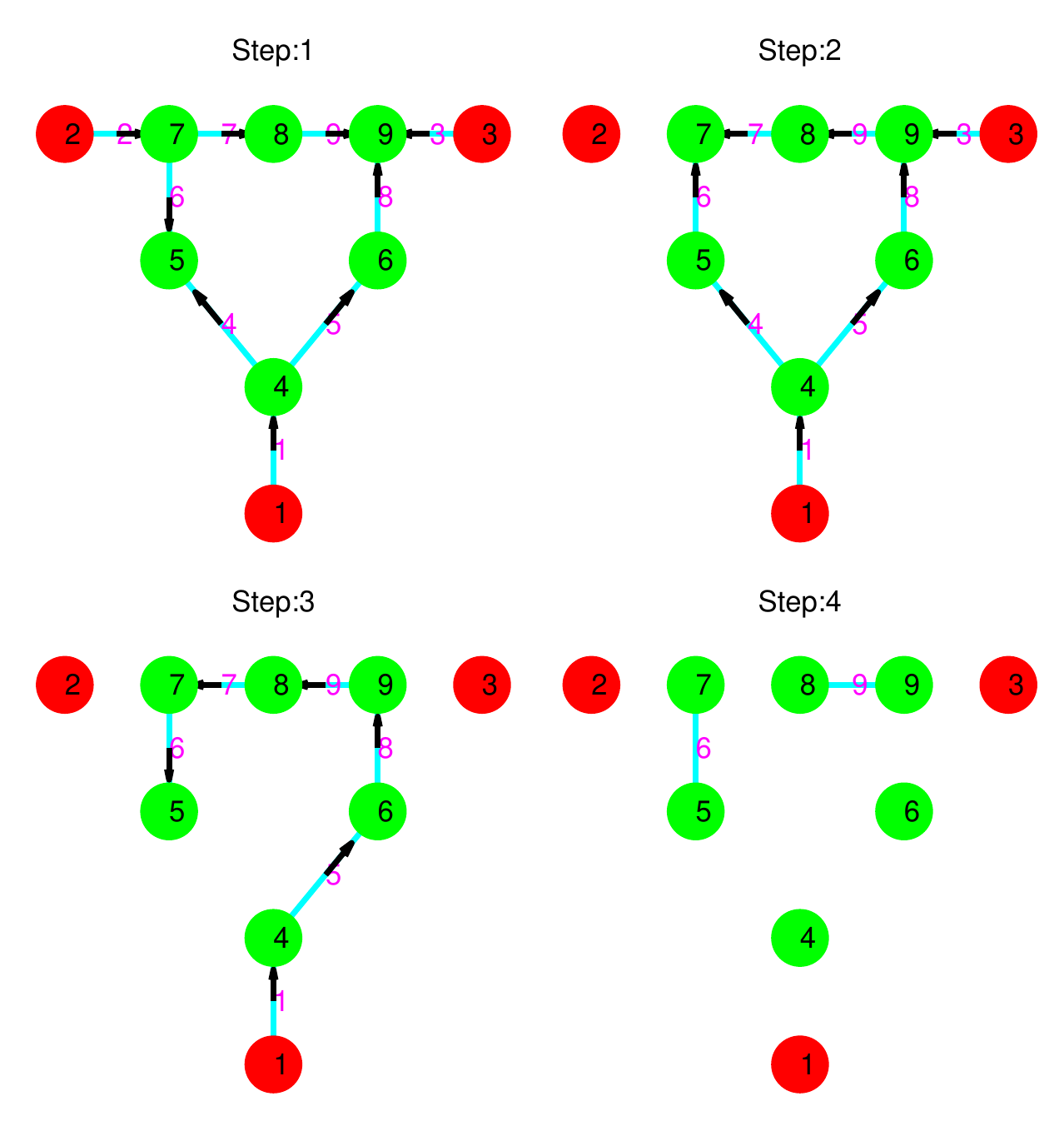}}\centering
\caption{\label{cnm} Cascading process of the IEEE 9 bus system based on the CNM. Red balls denote the generator buses, and green ones refer to the load buses. Bus identity (ID) numbers and branch ID numbers are marked as well. The arrows represent the power flow on each branch. A branch is severed once its transmission power exceeds the given threshold. The arrow disappears if there is no power transmission on the branch.}
\end{figure}

\begin{remark}
Optimal control theory enables us to obtain the initial disturbances leading to worst-case cascading failures of power networks described by the CNM and the DCM. Nevertheless, it does not apply to the ACM for the time being since the closed-form solutions to the AC power flow equation are not available.
\end{remark}

\section{Numerical Simulations}\label{sec:sim}

In this section, we present the numerical solution to the system of algebraic equations (\ref{con_sys}) for both CNM and DCM of IEEE 9 bus system. For the ACM, we analyze its cascading process by adding the computed disturbance based on the DCM. As is known, IEEE 9 bus system is composed of 9 buses (3 generator buses and 6 load buses) and 9 branches \cite{zim11}. Per unit values are adopted with the base value of power $100$ MVA in numerical simulations. Other parameters for the models of power networks are given as $\sigma=5\times10^4$, $\epsilon=10^{-4}$ and $m=9$. The vector of power threshold on each branch is $c=(c_{ij})=(1,1.8,1,0.6,0.5,1,1,1,1)$. In addition, the solver ``fsolve" in Matlab is adopted to solve the system of algebraic equations (\ref{con_sys}).

\subsection{Cascading Process}
The parameter setting of the CNM is the same as the above except that $\epsilon=6\times10^{-8}$, indicating that more efforts are taken to minimize the first term of cost function (\ref{cost_fun}). We choose Branch 2 as the target to add the initial disturbance with the value of $-1.22$, which is obtained by solving the system of algebraic equations (\ref{con_sys}). Initially, this computed disturbance does not sever Branch 2, but it immediately leads to the outage of Branch 2 due to the overloading of branch power flow. In Fig.~\ref{cnm},  Step 1 describes the normal state of power systems without any disturbances.  The outage of Branch 2 at Step 2 triggers the chain reactions at Step 3 and Step 4. Finally, the power network stops evolving after Step 4 and ends up with 2 connected branches and no transmission of power flow.

\begin{figure}[t!]
\scalebox{0.68}[0.68]{\includegraphics{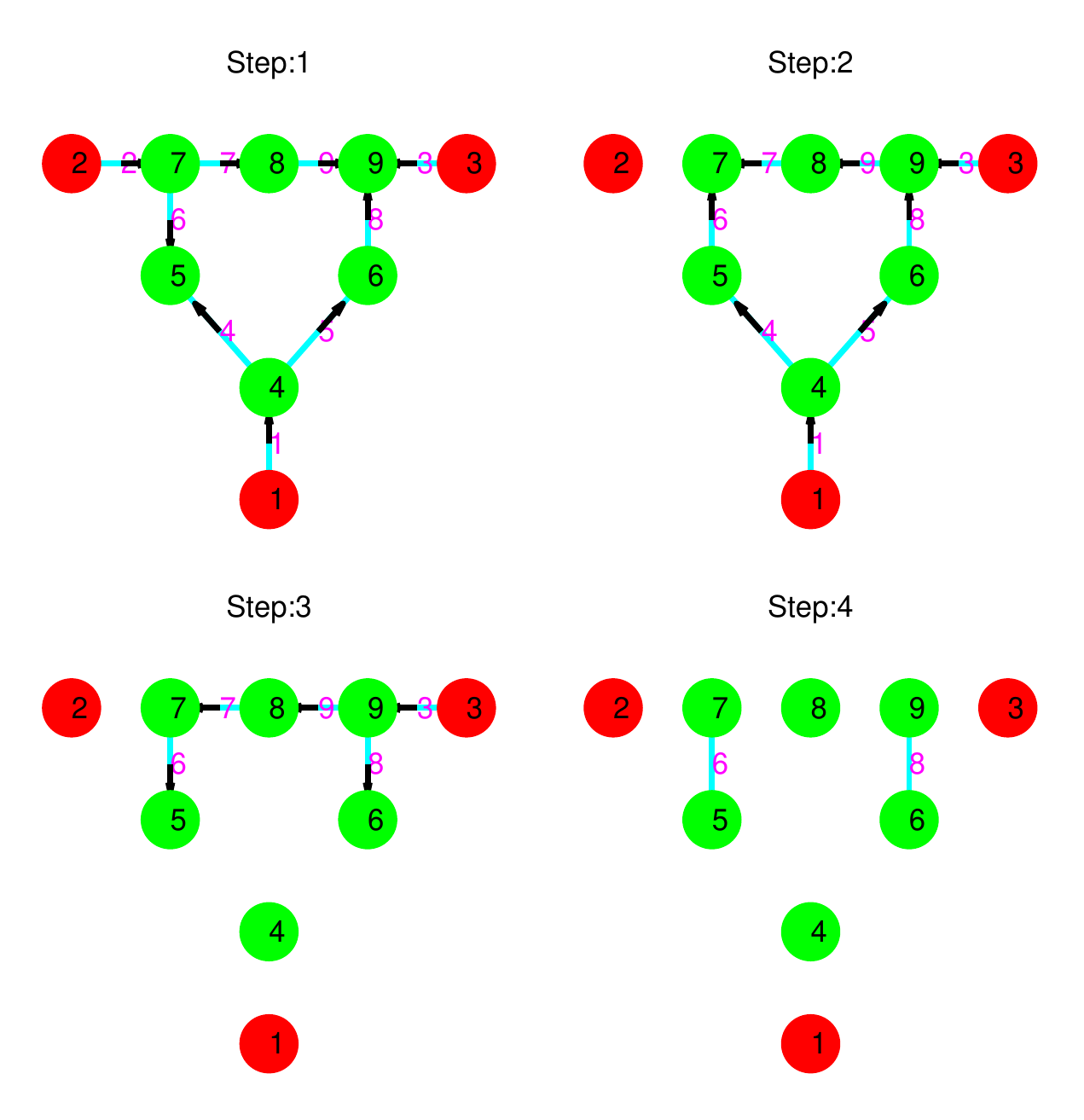}}\centering
\caption{\label{dcm} Cascading process of the IEEE 9 bus system based on the DCM.}
\end{figure}

For the DCM, Branch $2$ is also selected as the target to add the disruptive disturbance that initiates the chain reaction of cascading blackout. In Fig.~\ref{dcm}, the power network is running in the normal state at Step 1, and the legends are the same as those in Fig.~\ref{cnm}. Then the disruptive disturbance obtained by solving the system of algebraic equation (\ref{con_sys}) (susceptance decrement 10.87) is added to exactly sever Branch 2 at Step 2. Next, Branch 1, Branch 4 and Branch 5 break off simultaneously at Step 3. Afterwards, the DCM reaches a stable state, and it stops the evolution with 2 connected load buses and no power supply.

\begin{figure}[t!]
\scalebox{0.67}[0.67]{\includegraphics{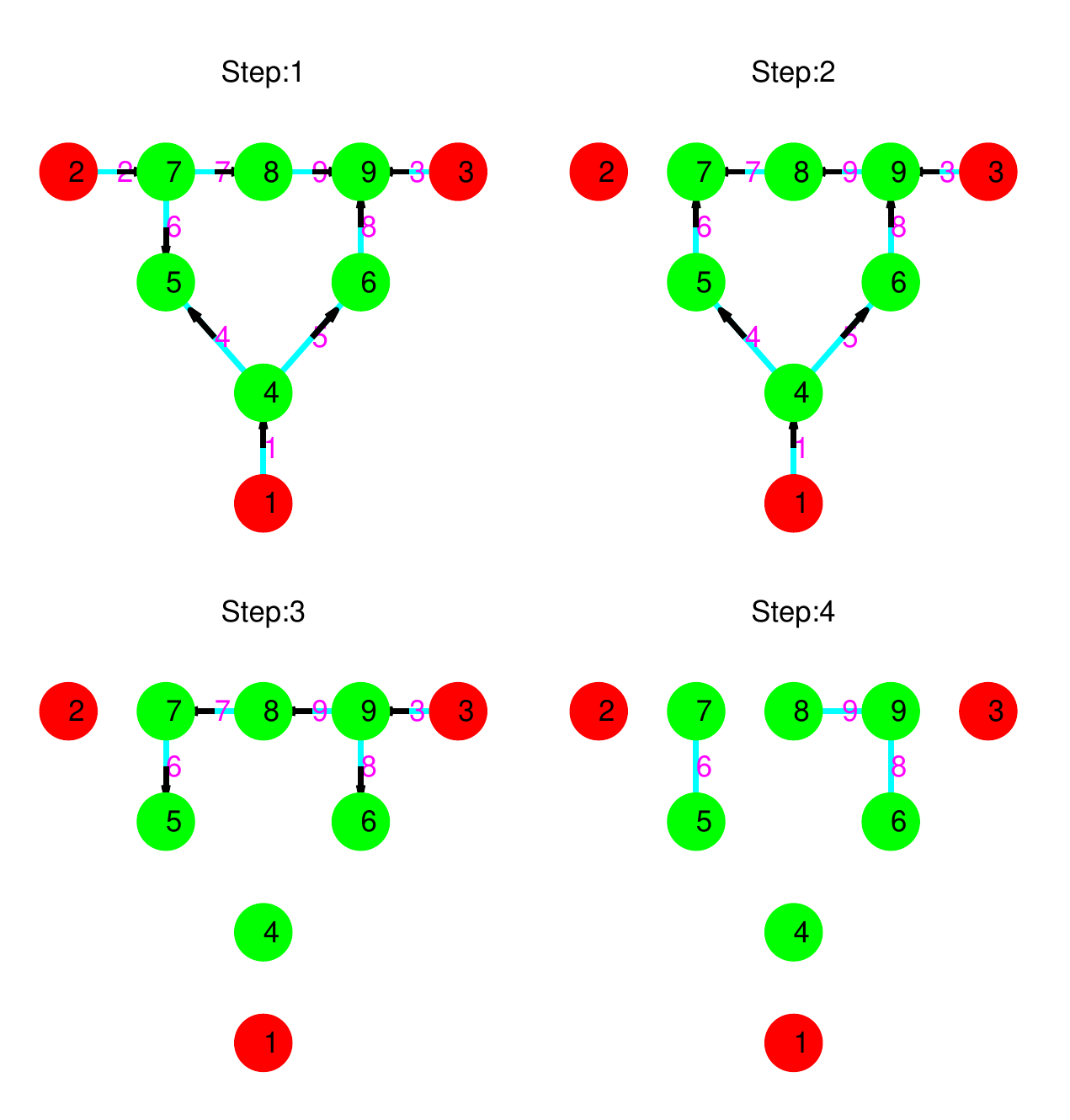}}\centering
\caption{\label{acm} Cascading process of the IEEE 9 bus system based on the ACM.}
\end{figure}

Fig.~\ref{acm} showcases the topology evolution of IEEE 9 bus system based on the ACM. Branch 2 is severed as the initial disturbance, which is the same as that of the DCM. Then we can observe that the ACM evolves like the DCM in the cascading process in terms of both the flow direction and network topology at the first 3 cascading steps. This demonstrates the good approximation of the DCM to the ACM in the cascading process. Finally, the power network includes 3 connected branches at Step 4, which is slightly different from the final configuration of the DCM.

\subsection{Discussions}

\begin{figure}[t!]
\scalebox{0.45}[0.45]{\includegraphics{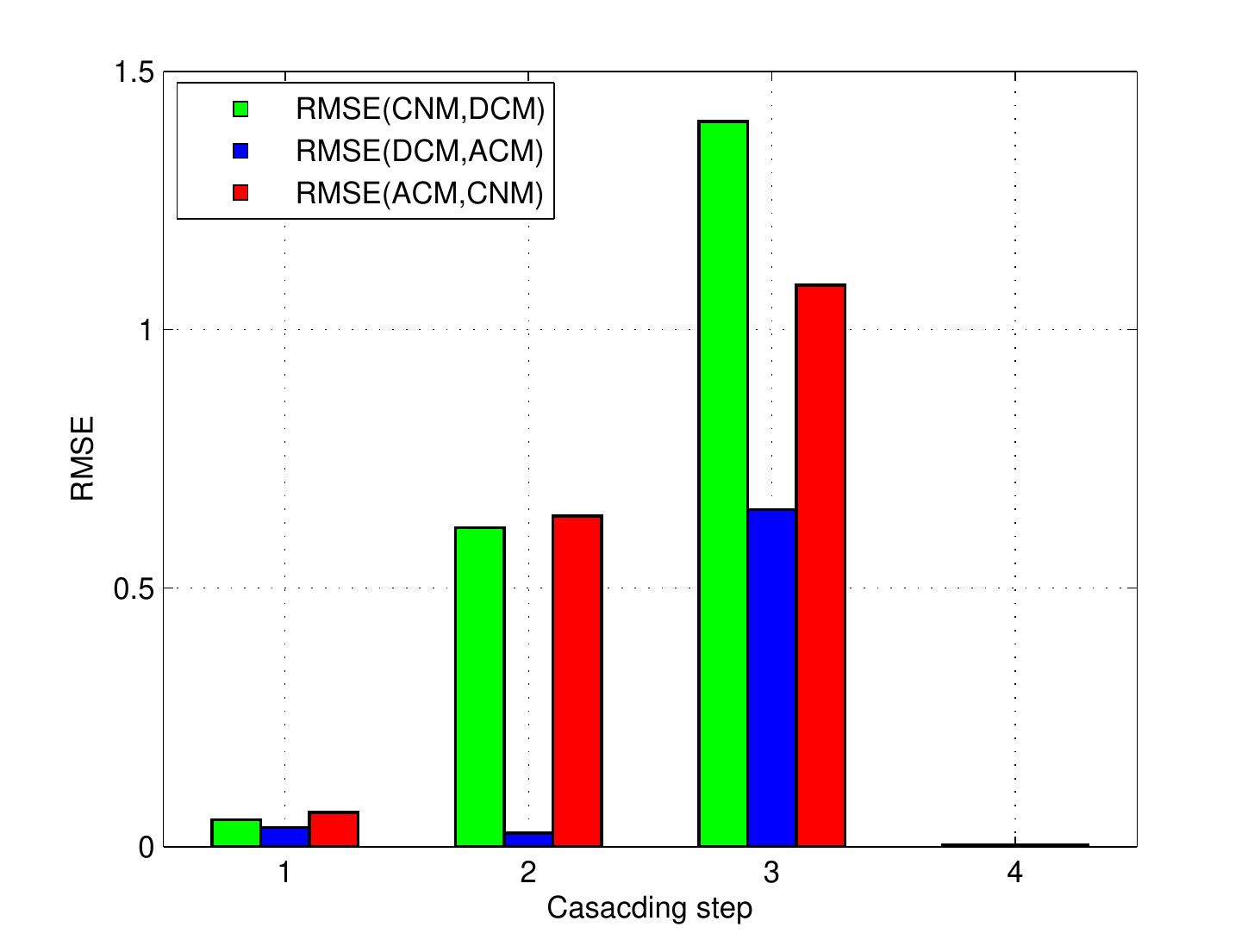}}\centering
\caption{\label{rmse} RMSE of transmission power between two different models at each cascading step.}
\end{figure}

In order to quantify the performance similarity of the three models in the cascading process, we introduce the root mean square error (RMSE) of transmission power on branches, which is defined as
$$
RMSE(X,Y)=\sqrt{\frac{1}{n}\sum_{i=1}^{n}(X_i-Y_i)^2}
$$
where $X,Y\in R^n$ denote the vectors of transmission power on $n$ branches. Intuitively, the smaller RMSE values between two different models imply the more similar performance of these two models at the cascading step. For example, the RMSE values between any two different models at Step 1 are given as: $\texttt{RMSE}(CNM,DCM)=0.0515$, $\texttt{RMSE}(DCM,ACM)=0.0371$ and $\texttt{RMSE}(ACM,CNM)=0.0656$. The above RMSE values indicate that the DCM is quite close to the ACM, and the CNM is relatively closer to the DCM rather than the ACM ($0.0515<0.0656$).

Fig.~\ref{rmse} presents the RMSE of transmission power between two different models at each cascading step. All the RMSE values are quite small at Step 1, which indicates the three models are relatively close to each other in terms of branch power flow. Actually, the three models achieve the same flow direction on each branch according to simulation results in Fig.~\ref{cnm}, Fig.~\ref{dcm} and Fig.~\ref{acm}. At Step 2, the RMSE value between the DCM and the ACM is much smaller than those of the other two model pairs, although the power flows of the three models move in the same direction. All the RMSE values at Step 3 reach the peak, and the RMSE value between the DCM and the ACM is still the smallest among the three model pairs. On the whole, the DCM evolves in the same way that the ACM does under the same initial disturbance ($i.e.$, severing Branch 2). In contrast, the CNM evolves along a totally different trajectory after the outage of Branch 2. Finally, all the three models end up with no transmission power on each branch at Step 4. Thus all the RMSE values become 0.

The above comparisons indicate that the CNM is incompetent to describe the cascading failures of power networks. As for the DCM and the ACM, the two models allow us to obtain the identical flow directions and similar flow magnitudes at Step 1 and Step 2. It is worth pointing out that the numerical algorithm in Matpower failed to converge at Step 3 due to the nonlinearity and non-convexity of the AC power flow equation, which results in the large disparity of power flow between the DCM and the ACM. Thus, the DCM is a good substitute for the ACM when numerical solutions to the AC power flow equation are not available. In short, it is feasible to replace the ACM with the DCM for small scale transmission networks when we investigate cascading failures of power networks.

\section{Conclusion}\label{sec:con}

In this paper, we considered the comparison of three different models that describe the cascading failure of power networks. The critical risks of power networks based on these models were identified with the aid of optimal control theory. Moreover, simulation results on IEEE 9 bus system demonstrated that the model based on pure network topology failed to characterize the actual evolution of cascading blackouts. This indicates that the physical and electrical properties have to be taken into account appropriately even for the cascading dynamics of a simple power network. Future work includes the quantification of cascading path and the risk identification of AC-based power networks.

\section*{Acknowledgment}

This work is partially supported by the Future Resilient Systems Project at the Singapore-ETH Centre (SEC), which is funded by the National Research Foundation of Singapore (NRF) under its Campus for Research Excellence and Technological Enterprise (CREATE) program. It is also partially supported by Ministry of Education of Singapore under contract MOE2016-T2-1-119.

\end{document}